\documentclass[12pt,a4paper]{article}
\usepackage{amsthm,amsmath,amssymb}
\usepackage{enumerate}
\usepackage[margin=3cm,a4paper]{geometry}
\usepackage{hyperref}
\hypersetup{
 pdftitle={Rank-Width of Random Graphs},
 pdfauthor={Choongbum Lee, Joonkyung Lee, Sang-il Oum}
}
\newtheorem{theorem}{Theorem}[section]
\newtheorem{corollary}[theorem]{Corollary}
\newtheorem{THM}{Theorem}[section]
\newtheorem{LEM}[THM]{Lemma}

\newtheorem{COR}[THM]{Corollary}
\newtheorem{PROP}[THM]{Proposition}
\theoremstyle{definition}

\newtheorem*{REM}{Remark}
\newcommand{\row}{\operatorname{row}}

\begin{document}
\title{Rank-width of Random Graphs\footnote{
  The first author was supported in part by Samsung Scholarship.
  The second and third authors were supported by SRC Program of Korea Science and Engineering Foundation(KOSEF) 
  grant funded by the Korea government (MOST)(No. R11-2007-035-01002-0).
  The third author was also partially supported by TJ Park Junior Faculty Fellowship.}
}
\author{
  Choongbum Lee\thanks{\texttt{choongbum.lee@gmail.com}} \\
    Department of Mathematics\\ UCLA, Los
Angeles, CA, 90095. 
\and
Joonkyung Lee\thanks{\texttt{jk87@kaist.ac.kr}} 
\\
 Department of Mathematical Sciences\\ KAIST,
 Daejeon 305-701, Republic of Korea.
 \and
 Sang-il Oum\thanks{\texttt{sangil@kaist.edu}}%
 \\
 Department of Mathematical Sciences\\ KAIST,
 Daejeon 305-701, Republic of Korea.
}
\date{January 4, 2010}
\maketitle
\begin{abstract}
 \emph{Rank-width} of a graph $G$, denoted by $\mathbf{rw}(G)$, is a width parameter of graphs introduced by Oum and Seymour (2006).
  We investigate the asymptotic behavior of rank-width of a random graph $G(n,p)$.
We show that, asymptotically almost surely,
(i) if $p\in(0,1)$ is a constant, then $\mathbf{rw}(G(n,p)) = \lceil \frac{n}{3}\rceil-O(1)$,
(ii) if $\frac{1}{n}\ll p \le \frac{1}{2}$, then $\mathbf{rw}(G(n,p)) = \lceil \frac{n}{3}\rceil-o(n)$, 
(iii) if $p = c/n$ and $c > 1$, then  $\mathbf{rw}(G(n,p)) \ge r n$ for some $r = r(c)$, and 
(iv) if $p \le c/n$ and $c<1$, then $\mathbf{rw}(G(n,p)) \le 2$.
 As a corollary, we deduce that $G(n,p)$ has linear tree-width
 whenever $p=c/n$ for each $c>1$, answering a question of Gao
 (2006).

\medskip
 \noindent
Keywords: rank-width, tree-width, clique-width, random graph, sharp threshold.
\end{abstract}

\section{Introduction}

\emph{Rank-width} of a graph $G$, denoted by $\mathbf{rw}(G)$, is a graph width parameter introduced by Oum and Seymour~\cite{OS2004} and measures the complexity 
of decomposing $G$ into a tree-like structure. The precise definition will be given in the following section. One fascinating aspect of this parameter lies
in its computational applications, namely, if a class of graphs has bounded rank-width, then many NP-hard problems are solvable on this class in polynomial time; for example, see~\cite{CMR2000}.

We consider the Erd\H{o}s-R\'enyi random graph $G(n,p)$. In this model, a graph $G(n,p)$ on a vertex set $\{1,2,\cdots,n\}$ is chosen randomly as follows: for each unordered pair of vertices, they are adjacent with probability $p$ independently at random. Given a
graph property $\mathcal{P}$, we say that $G(n,p)$ possesses $\mathcal{P}$ \emph{asymptotically almost surely}, or a.a.s.\ for brevity, if the probability that $G(n,p)$ possesses $\mathcal{P}$ converges to 1 as $n$ goes to infinity. A function $f:\mathbb{N}\rightarrow[0,1]$ is called the \emph{sharp threshold} of $G(n,p)$ with respect to having $\mathcal{P}$ if the following hold: if $p\ge cf(n)$ for a constant $c>1$, then $G(n,p)$ a.a.s.\ satisfies $\mathcal{P}$ and otherwise if $p\le cf(n)$ and $c<1$, then $G(n,p)$ a.a.s.\ does not satisfy $\mathcal{P}$.

The following is our main result.
\begin{theorem}\label{main} For a random graph $G(n,p)$, the following holds asymptotically almost surely:
	\begin{enumerate}[(i)]
	  \setlength{\itemsep}{1pt}
  \setlength{\parskip}{0pt}
  \setlength{\parsep}{0pt}
		\item if $p\in(0,1)$ is a constant, then $\mathbf{rw}(G(n,p)) = \lceil \frac{n}{3}\rceil-O(1)$,
		\item if $\frac{1}{n}\ll p \le \frac{1}{2}$, then $\mathbf{rw}(G(n,p)) = \lceil \frac{n}{3}\rceil-o(n)$,
		\item if $p = {c}/{n}$ and $c > 1$, then  $\mathbf{rw}(G(n,p)) \ge r n$ for some $r = r(c)$, and
		\item if $p \le {c}/{n}$ and $c<1$, then $\mathbf{rw}(G(n,p)) \le 2$.
	\end{enumerate}
\end{theorem}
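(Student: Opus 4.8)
The plan is to separate the dense regimes (i)--(ii) from the sparse regimes (iii)--(iv), and within (i)--(ii) to treat the upper and lower bounds independently. The upper bound is free: every $n$-vertex graph satisfies $\mathbf{rw}(G)\le\lceil n/3\rceil$, as witnessed by a sub-cubic tree consisting of a central vertex with three caterpillar branches carrying about $n/3$ leaves each; every edge of this tree induces a bipartition $(A,B)$ with $\min(|A|,|B|)\le\lceil n/3\rceil$, and the cut-rank of a bipartition never exceeds $\min(|A|,|B|)$. So only the lower bounds carry content. For these I would use the standard balanced-edge lemma for sub-cubic trees: any rank-decomposition of $G$ contains an edge whose associated bipartition $(A,B)$ satisfies $|A|,|B|\ge\lceil n/3\rceil$. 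Hence $\mathbf{rw}(G)$ is at least the minimum, over all bipartitions with both sides of size $\ge\lceil n/3\rceil$, of the cut-rank $\rho(A,B)$, the $\mathrm{GF}(2)$-rank of the $A\times B$ submatrix of the adjacency matrix. Since in $G(n,p)$ the entries of this submatrix are independent $\mathrm{Bernoulli}(p)$, everything reduces to a rank estimate for random $\mathrm{GF}(2)$ matrices, taken uniformly over the choice of cut.

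The heart of (i)--(ii) is therefore a tail bound for the corank of a random $a\times b$ $\mathrm{Bernoulli}(p)$ matrix $M$ with $a\le b$. I would bound $\Pr[\operatorname{corank}(M)\ge t]$ by a union bound over $t$-dimensional subspaces $W$ lying in the left null space: for a fixed $W$ with basis matrix $B$ one has $\Pr[BM=0]=\big(2^{-t}\sum_{y\in W}(1-2p)^{\operatorname{wt}(y)}\big)^{b}$, where $\operatorname{wt}$ is Hamming weight, while the number of candidate $W$ is at most $2^{t(a-t)}$. Feeding this into a union bound over all $\le 2^{n}$ balanced cuts and optimizing $t$ against the available slack $a-(\lceil n/3\rceil - C)$ should give, for constant $p$, a slack of only $C=O(1)$ (the extreme cut of ratio $1/3:2/3$ is the tight case, where even corank $\ge 3$ is super-exponentially unlikely), yielding (i); and for $\tfrac1n\ll p\le\tfrac12$ a slack of $o(n)$, yielding (ii). \textbf{The main obstacle} is controlling the low-weight subspaces $W$: when $W$ contains vectors of small support the factor $\sum_{y\in W}(1-2p)^{\operatorname{wt}(y)}$ is close to $1$ and the per-subspace estimate is weak; this is exactly the regime of a few dependent rows, i.e.\ vertices with near-equal neighbourhoods. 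These terms must be split off and bounded separately, and this is precisely where the $O(1)$ loss of (i) degrades to the $o(n)$ loss of (ii) as $p\to0$.

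For (iv), with $p\le c/n$ and $c<1$, I would invoke the classical structure of the subcritical random graph: a.a.s.\ no component of $G(n,p)$ contains two independent cycles, so every component is a tree or is unicyclic. Trees have rank-width at most $1$ and a unicyclic graph has rank-width at most $2$; since the rank-width of a disjoint union is the maximum of the rank-widths, $\mathbf{rw}(G(n,p))\le 2$ follows. This part is routine once the structural input is cited. The supercritical case (iii), with $p=c/n$ and $c>1$, is genuinely different: the balanced-edge approach of (i)--(ii) fails, since an adversary can place the $\approx e^{-c}n$ isolated (and other low-degree) vertices on one side of a balanced cut, forcing its cut-rank to be small. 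I would instead exploit monotonicity of rank-width under induced subgraphs, $\mathbf{rw}(G)\ge\mathbf{rw}(H)$, taking $H$ to be the $2$-core (or a min-degree-$\ge 3$ expanding piece) of the giant component, which has $\Theta(n)$ vertices, and proving $\mathbf{rw}(H)\ge rn$.

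Concretely, using the expansion of $H$ every balanced cut of $H$ is crossed by $\Omega(n)$ edges; converting this into a certificate of linear $\mathrm{GF}(2)$-rank --- for instance a crossing induced matching, whose bipartite adjacency is an identity block --- gives $\rho(A,B)=\Omega(n)$ for every balanced cut of $H$, and the balanced-edge lemma applied to $H$ then finishes the bound, with $r=r(c)$ tracking the expansion constant. \textbf{The main obstacle here} is this conversion step: because $G(n,c/n)$ has unbounded (logarithmic) maximum degree and the $2$-core is a conditioned object lacking clean independence, extracting a linear-size induced matching (or another degree-robust rank certificate) from merely $\Omega(n)$ crossing edges is delicate, and this is where the real work of (iii) lies. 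Finally, combining $\mathbf{rw}(G)\ge rn$ with the inequality $\mathbf{rw}\le\operatorname{tw}+1$ gives the linear lower bound on tree-width asserted in the corollary.
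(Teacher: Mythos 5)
Your overall architecture for (i)--(ii) --- the balanced-edge lemma for subcubic trees reducing rank-width to the cut-rank of cuts with both sides of size at least $n/3$, then a union bound over at most $2^n$ cuts against a corank tail bound for Bernoulli$(p)$ matrices over $\mathbb{F}_2$ --- is exactly the paper's, and you correctly identify the $1/3{:}2/3$ cut as tight and the $O(1)$-versus-$o(n)$ slack dichotomy. But the core probabilistic estimate is not actually established. Your route bounds $\Pr[BM=0]$ via the Fourier identity $\Pr[BM=0]=\bigl(2^{-t}\sum_{y\in W}(1-2p)^{\operatorname{wt}(y)}\bigr)^{b}$ and a union over $t$-dimensional subspaces $W$ of the left kernel, and you yourself note that subspaces containing low-weight vectors defeat the per-subspace bound; the weight-profile splitting needed to repair this is precisely the hard part of that classical analysis, and you leave it undone. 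The paper sidesteps the weight issue entirely: its Proposition 3.1 shows $\Pr(v\in U)\le\eta^{\,n-k}$ for \emph{any} fixed $k$-dimensional subspace $U$, with $\eta=\max(p,1-p)$, by permuting coordinates so that a basis matrix of $U$ has its first $k$ columns linearly independent and observing that, conditioned on the first $k$ entries of $v$, membership in $U$ forces all remaining $n-k$ entries; its Lemma 3.2 then unions over which $\lceil n/3\rceil-\alpha$ rows span the row space, giving the clean bound $2^{(\frac12-\frac16 C)n}$, uniform in $p$, which beats the $3^n$ cuts and makes the constant-$p$ slack $C/\log_2(1/\eta)=O(1)$ and the sparse slack $O(1/p)=o(n)$ immediate. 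As written, (i) and (ii) rest on an acknowledged but unresolved obstacle.

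Part (iv) matches the paper. For (iii) there are two genuine gaps. First, your choice of $H$ is wrong: the $2$-core of $G(n,c/n)$ is \emph{not} a constant expander, since for fixed $c>1$ it consists of a kernel whose edges are subdivided into induced paths of degree-$2$ vertices of length up to $\Theta(\log n)$; taking $S$ to be half of such a path gives $e(S,S^c)=2$ with $d(S)=\Theta(\log n)$, so $\Phi$ of the $2$-core is $O(1/\log n)$, and even the weaker statement you need --- that every vertex-balanced cut of the $2$-core is crossed by $\Omega(n)$ edges --- is a nontrivial claim for which you offer no proof. The actual input in the paper is the theorem of Benjamini, Kozma and Wormald, which supplies a connected subgraph $H$ with $|V(H)|\ge\delta n$ and $\Phi(H)\ge\alpha>0$; this is a substantial external theorem, not a consequence of passing to the $2$-core or ``a min-degree-$3$ piece.'' Second, the conversion of $\Omega(n)$ crossing edges into $\Omega(n)$ cut-rank, which you flag as the main obstacle, is resolved in the paper not by extracting an induced matching but by two elementary lemmas: a second-moment estimate showing that for every $\varepsilon>0$ there is $M=M(c,\varepsilon)$ such that a.a.s.\ at most $\varepsilon n$ edges are incident with vertices of degree at least $M$ (note that a global degree bound is unavailable, as $\Delta(G(n,c/n))$ grows like $\log n/\log\log n$), so deleting those vertices keeps half the crossing edges while bounding all row and column sums of $N_{W_1',W_2'}$ by $M$; and a counting lemma stating that an $\mathbb{F}_2$ matrix with at least $m$ nonzero entries and at most $M$ per row and column has rank at least $m/M^2$. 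Your induced-matching certificate would also work \emph{after} such a degree truncation, but without a lemma of that type the extraction you describe has no proof, so (iii) as proposed is incomplete and, via the $2$-core, would at best yield $\Omega(n/\log n)$.
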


Since $\mathbf{rw}(G)\le\lceil\frac{|V(G)|}{3}\rceil$ for every graph $G$, (i) and (ii) 
of this theorem give a narrow range of rank-width. Note that this theorem also gives a bound when $p\ge\frac{1}{2}$, since the rank-width of $G(n,p)$ in this range can be obtained from the inequality $\mathbf{rw}(\overline{G})\le\mathbf{rw}(G)+1$.

Clique-width of a graph $G$, denoted by $\mathbf{cw}(G)$,
is a width parameter introduced by Courcelle and
Olariu~\cite{CO2000}. It is strongly related to rank-width by the
following inequality by Oum and Seymour~\cite{OS2004}.
\begin{equation}\label{eq:cwrw}
\mathbf{rw}(G)\le \mathbf{cw}(G)\le 2^{ \mathbf{rw}(G)+1} -1.
\end{equation}

Tree-width, introduced by Robertson and Seymour~\cite{RS1984}, is a width parameter measuring how similar a graph is to a tree and is closely related to rank-width. We will denote the tree-width of a graph $G$ as $\mathbf{tw}(G)$. The following inequality was proved by Oum~\cite{Oum2006c}: for every graph $G$, we have
\begin{equation}\label{tw_rw_ineq}
\mathbf{rw}(G)\le\mathbf{tw}(G)+1.
\end{equation}

There have been works on tree-width of random graphs.
Kloks~\cite{Kloks1994} proved that $G(n,p)$ with $p=c/n$ has linear
tree-width whenever 
$c> 2.36$. Gao~\cite{Gao2006} improved this constant to $2.162$ and
even conjectured that $c$ can be improved to a constant less than~$2$.
We improve the above constant to the best possible number, $1$,
by the 
following corollary,  stating that there is the sharp threshold
$p={1}/{n}$ of $G(n,p)$ with respect to having linear tree-width.
\begin{corollary}
  Let $c$ be a constant and let $G=G(n,p)$ with $p=c/n$.
  Then the following holds
  asymptotically almost surely:
  \begin{enumerate}[(i)]
  \item If $c>1$, then 
    rank-width, clique-width,and tree-width of $G$ are  at least $c'n$
    for some constant $c'$ depending only on $c$.
  \item If $c<1$, then
    rank-width and tree-width of $G$ are at most $2$ and clique-width of $G$ is
    at most $5$.
  \end{enumerate}
\end{corollary}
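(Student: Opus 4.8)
The plan is to deduce everything from Theorem~\ref{main} together with the two inequalities~\eqref{eq:cwrw} and~\eqref{tw_rw_ineq}, supplemented in the subcritical case by the classical structure of $G(n,c/n)$ with $c<1$. For part~(i), suppose $c>1$. By Theorem~\ref{main}(iii) we have, a.a.s., $\mathbf{rw}(G)\ge rn$ for some $r=r(c)>0$. The lower bound on clique-width is then immediate from the left-hand inequality in~\eqref{eq:cwrw}, which gives $\mathbf{cw}(G)\ge\mathbf{rw}(G)\ge rn$; and rearranging~\eqref{tw_rw_ineq} gives $\mathbf{tw}(G)\ge\mathbf{rw}(G)-1\ge rn-1$. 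Choosing any constant $c'$ with $0<c'<r$, all three parameters exceed $c'n$ once $n$ is large, so part~(i) follows with essentially no extra work.

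For part~(ii), suppose $c<1$. The bound $\mathbf{rw}(G)\le 2$ is exactly Theorem~\ref{main}(iv). The subtlety is that the remaining two bounds cannot be read off from the inequalities directly: \eqref{tw_rw_ineq} only bounds tree-width \emph{below} by rank-width, and \eqref{eq:cwrw} only yields $\mathbf{cw}(G)\le 2^{\mathbf{rw}(G)+1}-1\le 7$, short of the claimed $5$. The extra ingredient I would invoke is the classical fact that for $c<1$ the random graph $G(n,c/n)$ is a.a.s.\ a disjoint union of trees and unicyclic components (equivalently, a.a.s.\ no component contains two independent cycles). Given this, the tree-width bound is immediate, since a tree has tree-width $1$, a unicyclic graph has tree-width $2$, and the tree-width of a graph is the maximum of the tree-widths of its components; hence $\mathbf{tw}(G)\le 2$ a.a.s.

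For the clique-width bound I would argue component-wise, using that the clique-width of a graph equals the maximum clique-width over its connected components (build each component and recolor all its vertices with a single label before taking disjoint unions, which needs no new labels). It is standard that every tree has clique-width at most $3$. I would then exhibit an explicit clique-width expression showing that every unicyclic graph has clique-width at most $5$: build the rooted trees hanging off the cycle using three labels, while reserving two further labels to carry the two growing ends of the cycle and to close it at the end. Taking the maximum over all components then gives $\mathbf{cw}(G)\le 5$ a.a.s.

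The routine half is part~(i) and the rank-width statement of part~(ii), which are direct corollaries of Theorem~\ref{main} and the two inequalities. The hard part will be the clique-width bound in part~(ii): because~\eqref{eq:cwrw} is too lossy, one must exploit the actual structure of the subcritical graph, and the delicate point is to pin the constant down to $5$ by a bounded-label construction for unicyclic graphs rather than settling for the weaker value that $\mathbf{tw}\le 2$ yields through the generic estimate $\mathbf{cw}\le 3\cdot 2^{\mathbf{tw}-1}=6$.
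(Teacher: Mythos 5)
Your proposal is correct and takes essentially the same route as the paper: part~(i) is deduced from Theorem~\ref{main}(iii) together with \eqref{eq:cwrw} and \eqref{tw_rw_ineq}, and part~(ii) from the Erd\H{o}s--R\'enyi theorem that subcritical $G(n,c/n)$ a.a.s.\ consists of tree and unicyclic components. The only difference is that the paper dismisses the resulting width bounds as ``straightforward to see,'' whereas you supply the details it omits --- in particular the explicit $5$-label clique-width expression for unicyclic graphs, which is genuinely needed since the generic routes via $\mathbf{rw}\le 2$ or $\mathbf{tw}\le 2$ only give $7$ or $6$.
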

\begin{proof}
  (i) follows Theorem \ref{main} with \eqref{eq:cwrw} and
  \eqref{tw_rw_ineq}. 
  (ii) follows easily due to the
  theorem by Erd\H{o}s and R\'enyi~\cite{ER1960,ER1961} stating that asympototically almost
  surely, each component of  $G(n,p)$ with $p=c/n$, $c<1$
  has at most one cycle. It is straightforward to see that such graphs
  have small tree-width, clique-width, and rank-width.
\end{proof}

\section{Preliminaries}

All graphs in this paper have neither loops nor parallel edges. Let $\Delta(G), \delta(G)$ be the maximum
degree and the minimum degree of a graph $G$ respectively. For two subsets $X$ and $Y$ of $V(G)$, let $E_G(X,Y)$ be the set of ordered pairs $(x,y)$ of adjacent vertices $x \in X$ and $y \in Y$. Let $e_G(X,Y) = |E_G(X,Y)|$. We will omit subscripts if it is not ambiguous.

Let $\mathbb{F}_2=\{0,1\}$ be the binary field. For disjoint subsets $V_1$ and $V_2$ of $V(G)$, let $N_{V_1,V_2}$ be a $0$-$1$ $|V_1|\times |V_2|$ matrix over $\mathbb{F}_2$ whose rows are labeled by $V_1$ and columns labeled by $V_2$, and the entry $(v_1, v_2)$ is $1$ if and only if $v_1 \in V_1$ and $v_2 \in V_2$ are adjacent. We define the \emph{cutrank} of $V_1$ and $V_2$, denoted by $\rho_G(V_1,V_2)$, to be $\mathbf{rank}(N_{V_1,V_2})$.

A tree $T$ is said to be \emph{subcubic} if every vertex has degree 1 or 3. A \emph{rank-decomposition} of a graph $G$ is a pair $(T,L)$ of a subcubic tree $T$ and a bijection $L$ from $V(G)$ to the set of all leaves of $T$. Notice that deleting an edge $uv$ of $T$ creates two components $C_u$ and $C_v$ containing $u$ and $v$ respectively. Let $A_{uv}=L^{-1}(C_u)$ and $B_{uv}=L^{-1}(C_v)$. Under these notations, \emph{rank-width} of a graph $G$, denoted by $\mathbf{rw}(G)$, is defined as 
\[
\mathbf{rw}(G)=\min_{(T,L)}\max_{uv\in E(T)}\rho_G (A_{uv},B_{uv}),
\]
where the minimum is taken over all possible rank-decompositions. We assume $\mathbf{rw}(G)=0$ if $|V(G)|\le 1$.

The following lemma will be used later.
\begin{LEM} \label{lemma_seperator} 
  Let $G=(V,E)$ be a graph with at least two vertices.
  If rank-width of $G$ is at most $k$, then there exist two disjoint
  subsets $V_1,V_2$ of $V$ such that
  \[\text{$|V_1|=\left\lceil\frac{n}{2}\right\rceil$,
    $|V_2|= \left\lceil\frac{n}{3}\right\rceil$, and $\rho_G(V_1, V_2) \le k$}. \]
\end{LEM}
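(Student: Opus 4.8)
The plan is to read the separation off an optimal rank-decomposition. Since $\mathbf{rw}(G)\le k$, fix a rank-decomposition $(T,L)$ with $\rho_G(A_{uv},B_{uv})\le k$ for every edge $uv$ of $T$. Because $L$ is a bijection from $V(G)$ onto the leaves of $T$, each edge $uv$ induces a partition of the $n$ leaves, so $|A_{uv}|+|B_{uv}|=n$. The whole lemma then reduces to a purely combinatorial fact about the subcubic tree $T$: there is an edge $uv$ with $|A_{uv}|\ge\lceil n/3\rceil$ and $|B_{uv}|\ge\lceil n/3\rceil$. Once such an edge is found, the larger of the two parts automatically has at least $\lceil n/2\rceil$ leaves (of two nonnegative integers summing to $n$, the larger is always at least $\lceil n/2\rceil$), so after relabelling we may assume $|A_{uv}|\ge\lceil n/2\rceil$ and $|B_{uv}|\ge\lceil n/3\rceil$.

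To prove the combinatorial claim I would use the standard ``majority'' orientation. Orient each edge of $T$ toward the side containing strictly more than $n/2$ of the leaves. If some edge splits the leaves exactly in half (possible only when $n$ is even, and covering the small case $n=2$), that edge already satisfies the claim, so assume every edge is oriented. Following outgoing edges strictly increases the number of leaves on the side we move into, hence the walk never repeats a vertex and terminates at a sink $v$, all of whose incident edges point toward it. A leaf cannot be a sink once $n\ge 3$, since its unique edge has only one leaf on its side and thus points away from it; so $v$ is an internal vertex of degree $3$. Its three incident edges cut off three subtrees whose leaf sets partition $V(G)$, of sizes $b_1,b_2,b_3$ with $b_1+b_2+b_3=n$ and each $b_i<n/2$ (that is exactly what it means for all three edges to point toward $v$). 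Since three nonnegative integers summing to $n$ have maximum at least $\lceil n/3\rceil$ (otherwise their sum would be at most $3(\lceil n/3\rceil-1)\le n-1$), the edge realizing $\max_i b_i$ yields one part of size $\max_i b_i\ge\lceil n/3\rceil$ and complementary part of size $n-\max_i b_i>n/2\ge\lceil n/3\rceil$, which is the desired edge.

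Finally I would assemble the conclusion. Choose $V_1\subseteq A_{uv}$ with $|V_1|=\lceil n/2\rceil$ and $V_2\subseteq B_{uv}$ with $|V_2|=\lceil n/3\rceil$; this is possible by the two size bounds, and $V_1,V_2$ are disjoint because $A_{uv}$ and $B_{uv}$ are. The matrix $N_{V_1,V_2}$ is the submatrix of $N_{A_{uv},B_{uv}}$ obtained by deleting the rows and columns indexed outside $V_1$ and $V_2$, so $\rho_G(V_1,V_2)=\mathbf{rank}(N_{V_1,V_2})\le\mathbf{rank}(N_{A_{uv},B_{uv}})=\rho_G(A_{uv},B_{uv})\le k$, completing the proof. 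Conceptually the argument is short, and everything after the subcubic separator claim is bookkeeping; the step I expect to require the most care is the separator claim itself, specifically verifying the ceiling inequalities and the degenerate cases $n=2,3$ where $T$ is a single edge or a star, rather than any genuine difficulty.
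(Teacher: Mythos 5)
Your proposal is correct and takes essentially the same route as the paper: both read the partition off an edge of an optimal rank-decomposition found by an orientation/sink argument on the subcubic tree (the paper orients edges away from sides with fewer than $n/3$ leaves and argues by contradiction, while you orient toward the strict majority side and argue directly), then pass to $V_1,V_2$ via monotonicity of rank under taking submatrices. One minor slip that does not affect validity: following outgoing edges the number of leaves on the side you move into actually \emph{decreases} (it is the side left behind that strictly grows), but termination at a sink holds regardless, since each edge carries a unique orientation, so a directed walk in a tree can never backtrack and hence never revisits a vertex.
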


\begin{proof}
Let $k=\mathbf{rw}(G)$. Let $(T,L)$ be a rank-decomposition of width $k$. We claim that there is an edge $e$ of $T$ such that $T\setminus e$ gives a partition $(A,B)$ of $V(G)$ satisfying $|A|\ge{n}/{3}$, $|B|\ge{n}/{3}$ and $\rho_G(A,B)\le k$. Assume the contrary. Then for each edge $e$ in $T$, $T\setminus e$ has a component $C_e$ of $T\setminus e$ containing less than ${n}/{3}$ leaves of $T$. Direct each edge $e=uv$ from $u$ to $v$ if $C_e$ contains $u$. Since this directed tree is acyclic, there is a vertex $t$ in $V(T)$ such that every edge incident with $t$ is directed toward $t$. Then there are at most $3$ components in $T\setminus t$ and each component has less than ${n}/{3}$ leaves of $T$, a contradiction. This proves the claim.

Given sets $A, B$ as above, we may assume $|A|\ge {n}/{2}$. Take $V_1\subseteq A$ and $V_2\subseteq B$ of size $\lceil\frac{n}{2}\rceil$ and $\lceil\frac{n}{3}\rceil$, respectively. Then $\rho_G(V_1,V_2) \le \rho_G(A, B) \le k$.
\end{proof}

\section{Rank-width of dense random graphs}

In this section we will show that if $\frac{1}{n} \ll \min(p,1-p)$, then the rank-width of $G(n,p)$ is a.a.s.\ $\lceil\frac{n}{3}\rceil -o(n)$. Moreover, for a constant $p\in(0,1)$, rank-width of $G(n,p)$ is a.a.s.\ $\lceil\frac{n}{3}\rceil -O(1)$.
This bound is achieved by investigating the rank of random matrices. The following proposition provides
an exponential upper bound to the probability of a random vector falling into a fixed subspace.

\begin{PROP} \label{prop_rvprob}
For $0 < p < 1$, let $\eta = \max (p,1-p)$. Let $v \in \mathbb{F}_2^n$ be a random $0$-$1$ vector whose entries are
1 or 0 with probability $p$ and $1-p$ respectively. Then for each $k$-dimensional subspace $U$ of $\mathbb{F}_2^n$,
\[  \mathbf{P}(v\in U) \le \eta^{n-k}  \]
\end{PROP}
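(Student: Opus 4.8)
The plan is to reduce the event $v\in U$ to a system of $n-k$ coordinate constraints that are satisfied independently, each with probability at most $\eta$. The whole argument is elementary linear algebra over $\mathbb{F}_2$ combined with the independence of the entries of $v$.

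First I would represent $U$ through its orthogonal complement. Since $\dim U=k$, the space $U^{\perp}$ has dimension $n-k$; picking a basis $w_1,\dots,w_{n-k}$ and stacking these as rows gives an $(n-k)\times n$ matrix $M$ of rank $n-k$, for which $v\in U$ if and only if $Mv=0$. (Equivalently one could work directly from a basis of $U$, but describing $U$ as a kernel is more convenient.) Next I would put $M$ into reduced row echelon form by Gaussian elimination. Because $M$ has full row rank $n-k$, it has exactly $n-k$ pivot columns; let $S\subseteq\{1,\dots,n\}$ with $|S|=n-k$ index these pivot coordinates, and call the remaining $k$ coordinates \emph{free}. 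In reduced row echelon form, the system $Mv=0$ expresses each pivot coordinate $v_j$ ($j\in S$) as a fixed $\mathbb{F}_2$-linear function of the free coordinates; so once the $k$ free coordinates are assigned arbitrary values, the event $\{Mv=0\}$ becomes precisely $\{v_j=c_j:j\in S\}$ for prescribed bits $c_j\in\mathbb{F}_2$ determined by the free coordinates.

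The final step is to condition on the free coordinates. Since the entries of $v$ are mutually independent, conditioning on the free coordinates leaves the pivot coordinates $\{v_j\}_{j\in S}$ independent, and each pivot coordinate equals its prescribed value $c_j$ with probability $p$ or $1-p$, hence at most $\eta$. Therefore the conditional probability of $\{Mv=0\}$ is at most $\eta^{\,n-k}$, uniformly over the values of the free coordinates, and averaging over those values gives $\mathbf{P}(v\in U)\le\eta^{\,n-k}$.

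The one point that requires care — and the reason the exponent is exactly $n-k$ — is isolating $n-k$ distinct coordinates that are simultaneously forced to fixed values by the constraints and mutually independent as random bits; the pivot/free decomposition from Gaussian elimination supplies exactly such a set. So I expect the main obstacle to be nothing more than setting up this decomposition cleanly: no concentration inequalities, union bounds, or counting estimates are needed.
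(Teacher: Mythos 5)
Your proof is correct and is essentially the paper's argument in dual form: where you describe $U$ as the kernel of a full-rank $(n-k)\times n$ matrix (a basis of $U^{\perp}$) and condition on the $k$ free coordinates of its reduced row echelon form, the paper takes a $k\times n$ matrix whose rows are a basis of $U$, permutes columns so that the first $k$ are linearly independent, and conditions on the first $k$ entries of $v$. In both versions the conditioning forces the remaining $n-k$ independent entries to unique prescribed values, each matched with probability at most $\eta$, giving the bound $\eta^{n-k}$ uniformly over the conditioned values and hence overall.
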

\begin{proof}
 Let $B$ be a $k\times n$ matrix whose row vectors form a basis of $U$. By permuting the columns if necessary, we may assume that the first k columns are linearly independent. For a vector $v \in \mathbb{F}_2^n$, let $v^{(k)}$ be the first $k$ entries of $v$, and note that
 \begin{equation}\label{rvprob}
 \mathbf{P} (v\in U)=\sum_{w\in\mathbb{F}_2^k}\mathbf{P}(v\in U|v^{(k)}=w)\mathbf{P}(v^{(k)}=w).
 \end{equation}
Let $u_1,u_2,\cdots,u_k$ be the row vectors of B. Observe that $\{u_j^{(k)}\}_{j=1}^k$ is a basis of $\mathbb{F}_2^k$. Thus, given $v^{(k)}=w=\sum_{i=1}^k c_iu_i^{(k)}$, we have $v\in U$ if and only if $v=\sum_{i=1}^k c_iu_i$. This implies that given each first $k$ entries of $v$, there is a unique choice of remaining entries yielding $v\in U$. Thus for every $w\in\mathbb{F}_2^k$, $\mathbf{P}(v\in U|v^{(k)}=w)\le\eta^{n-k}$. Combining with \eqref{rvprob}, we obtain
\[
 \mathbf{P}(v\in U) \le \eta^{n-k}\sum_{w\in\mathbb{F}_2^k}\mathbf{P}(v^{(k)}=w) = \eta^{n-k}, 
\]
 and this concludes the proof.
\end{proof}

Let $M(k_1,k_2;p)$ be a random $k_1\times k_2$ matrix whose entries are mutually independent and take value $0$ or $1$ with probability $1-p$ and $p$ respectively. Using Proposition \ref{prop_rvprob}, we can bound the probability that the rank of $M(\lceil\frac{n}{3}\rceil,\lceil\frac{n}{2}\rceil;p)$ deviates from $\lceil\frac{n}{3}\rceil$.

\begin{LEM} \label{lemma_rmprob}
 For $0 < p < 1$, let $\eta = \max(p,1-p)$. Then for every $C > 0$,
 $$
 \mathbf{P}\left(\mathbf{rank}\left(M\left(\Bigl\lceil\frac{n}{3}\Bigr\rceil,\Bigl\lceil\frac{n}{2}\Bigr\rceil;p\right)\right) \le \Bigl\lceil\frac{n}{3}\Bigr\rceil -\frac{C}{\log_2\frac{1}{\eta}} \right) < 2^{(\frac{1}{2}-\frac{1}{6}C) n}.
 $$
\end{LEM}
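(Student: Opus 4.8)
The plan is to reveal the matrix one row at a time and to apply Proposition~\ref{prop_rvprob} each time a row happens to be linearly dependent on the earlier ones. Write $k_1=\lceil n/3\rceil$ and $k_2=\lceil n/2\rceil$ for the numbers of rows and columns and set $t=C/\log_2\frac1\eta$; since the rank is an integer I may replace $t$ by $\lceil t\rceil$, which only strengthens the bound, and I may assume $t\le k_1$ (otherwise the event is empty). The rows $r_1,\dots,r_{k_1}$ are independent random vectors in $\mathbb{F}_2^{k_2}$ whose entries are distributed as in Proposition~\ref{prop_rvprob}, and $\mathbf{rank}(M)$ is the dimension of their span, so the claim is really that it is very unlikely for $t$ or more rows to be redundant.

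The first step is a linear-algebraic reduction. If $\mathbf{rank}(M)\le k_1-t$, then a maximal linearly independent set $I$ of rows has $|I|\le k_1-t$ and spans every row; choosing any $F\subseteq\{1,\dots,k_1\}\setminus I$ with $|F|=t$, we have $I\subseteq\{1,\dots,k_1\}\setminus F$, so each row indexed by $F$ lies in the span of the rows indexed by the complement of $F$. A union bound over the choice of $F$ therefore gives
\[
\mathbf P\bigl(\mathbf{rank}(M)\le k_1-t\bigr)\le\sum_{|F|=t}\mathbf P\Bigl(r_i\in\operatorname{span}\{r_j:j\notin F\}\text{ for every }i\in F\Bigr).
\]

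To estimate one summand I condition on the $k_1-t$ rows outside $F$. These span a subspace $U$ with $\dim U\le k_1-t$, and the $t$ rows indexed by $F$ are independent of $U$ and of one another, so Proposition~\ref{prop_rvprob} (with ambient dimension $k_2$) gives $\mathbf P(r_i\in U)\le\eta^{k_2-\dim U}\le\eta^{k_2-(k_1-t)}$ for each $i\in F$; multiplying over $i\in F$ and averaging over the conditioning bounds the summand by $\eta^{t(k_2-k_1+t)}$. Since there are $\binom{k_1}{t}\le 2^{k_1}$ choices of $F$, this yields
\[
\mathbf P\bigl(\mathbf{rank}(M)\le k_1-t\bigr)\le 2^{k_1}\,\eta^{t(k_2-k_1)}.
\]

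It remains to unwind the constants. By construction $\eta^{t}\le 2^{-C}$, and $k_2-k_1=\lceil n/2\rceil-\lceil n/3\rceil\ge n/6-O(1)$, so $\eta^{t(k_2-k_1)}\le 2^{-Cn/6+O(C)}$, while $2^{k_1}\le 2^{n/3+O(1)}$; multiplying gives a bound of order $2^{(\frac13-\frac16 C)n+O(1+C)}$, which for large $n$ sits comfortably below the claimed $2^{(\frac12-\frac16 C)n}$, the slack $\frac n2-\frac n3=\frac n6$ absorbing the rounding errors in $k_1$, $k_2$ and $t$. (The weaker constant $\frac12$ in the statement corresponds to bounding the number of subsets by $2^{k_2}$ rather than by $2^{k_1}$.) I expect the only real difficulty to be bookkeeping: making the conditioning precise enough that Proposition~\ref{prop_rvprob} genuinely applies to independent vectors, and confirming that the entropy term from the union bound is dominated by the decay $\eta^{t(k_2-k_1)}$, i.e.\ that the $\frac n3$ (or $\frac n2$) coming from counting subsets is beaten by the $\frac{Cn}{6}$ coming from the rank deficiency.
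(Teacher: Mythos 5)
Your proposal is correct and takes essentially the same route as the paper's proof: a union bound over the $t$-element set of redundant rows (the paper equivalently sums over the complementary spanning sets $I$), followed by conditioning on the spanning rows and applying Proposition~\ref{prop_rvprob} with mutual independence to obtain the factor $\eta^{t(k_2-k_1+t)}$. Your only deviation is bookkeeping: you count subsets by $2^{k_1}$ where the paper uses the looser $2^{\lceil n/2\rceil-1}$ (which, as you correctly observe, is the source of the $\frac12$ in the stated exponent), and keeping the discarded $\eta^{t^2}$ factor would let your chain cover small $n$ as well, just as the paper's does.
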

\begin{proof}
Let $M=M(\lceil\frac{n}{3}\rceil,\lceil\frac{n}{2}\rceil;p)$, $\alpha = \lceil\frac{C}{\log_2 \frac{1}{\eta}}\rceil$, and $\row(M)$ be the linear space spanned by the rows of $M$. We may assume $\lceil\frac{n}{3}\rceil-\alpha\ge 0$. Denote row vectors of $M$ by $v_1,v_2,\cdots,v_{\lceil\frac{n}{3}\rceil}$. Note that $\mathbf{rank}(M)$ is at most $\lceil\frac{n}{3}\rceil -\alpha$ if and only if there are $\lceil\frac{n}{3}\rceil-\alpha$ rows of $M$ spanning $\row(M)$. Thus
$$
\mathbf{P}\left(\mathbf{rank}(M)\le\Bigl\lceil\frac{n}{3}\Bigr\rceil -\alpha\right)\le
\sum_{I}\mathbf{P}\left(\{v_i\}_{i\in I}\mbox{ spans } \row(M)\right)
$$ 
where the sum is taken over all $I\subseteq\{1,2,\cdots,\lceil\frac{n}{3}\rceil\}$ with cardinality $\lceil\frac{n}{3}\rceil -\alpha$.
Let $U_I$ be the vector space spanned by row vectors $\{v_i\}_{i\in I}$. By Proposition \ref{prop_rvprob}, we get
$$
\mathbf{P}\left(\{v_i\}_{i\in I}\mbox{ spans } \row(M)\right)=
\mathbf{P}(\{v_j:j\notin I\}\subseteq U_I)\le(\eta^{\lceil\frac{n}{2}\rceil-\lceil\frac{n}{3}\rceil+\alpha})^{\alpha},
$$
since rows are mutually independent random vectors. Combining these inequalities, we conclude that
$$
\mathbf{P}\left(\mathbf{rank}(M)\le\Bigl\lceil\frac{n}{3}\Bigr\rceil -\alpha\right)\le 2^{\lceil\frac{n}{2}\rceil-1}(\eta^{\alpha})^{\lceil\frac{n}{2}\rceil-\lceil\frac{n}{3}\rceil+\alpha}\le 2^{\frac{n}{2}}2^{-\frac{n}{6}C} = 2^{(\frac{1}{2}-\frac{1}{6}C) n}
$$
because $\lceil\frac{n}{2}\rceil-\lceil\frac{n}{3}\rceil+\alpha\ge\frac{n}{6}$ and $\lceil\frac{n}{2}\rceil\choose{k}$ $\le 2^{\lceil\frac{n}{2}\rceil-1}$.
\end{proof}

\begin{PROP} \label{thm_rwasymptotic}
Let $\eta = \max(p,1-p)$ and $n\ge 2$. Then
$$
\mathbf{P}\left(\mathbf{rw} (G(n,p)) \le \Bigl\lceil\frac{n}{3}\Bigr\rceil - \frac{12.6}{\log_2 \frac{1}{\eta}}\right)<2^{-0.015n}.
$$
\end{PROP}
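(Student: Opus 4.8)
The plan is to chain together the two lemmas already proved. Lemma~\ref{lemma_seperator} turns the event $\mathbf{rw}(G)\le k$ into the existence of a \emph{fixed-size} low-cutrank bipartition, while Lemma~\ref{lemma_rmprob} bounds the cutrank of any single fixed bipartition. The bridge between them is that in $G(n,p)$ the adjacencies across a cut are mutually independent: for disjoint $V_1,V_2$ the matrix $N_{V_1,V_2}$ has independent $\mathrm{Bernoulli}(p)$ entries, so $\rho_G(V_1,V_2)=\mathbf{rank}(N_{V_1,V_2})$ has exactly the law of $\mathbf{rank}\left(M(|V_1|,|V_2|;p)\right)$.

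Concretely, set $k=\lceil n/3\rceil-\frac{12.6}{\log_2(1/\eta)}$. First I would invoke Lemma~\ref{lemma_seperator}: whenever $\mathbf{rw}(G)\le k$ there exist disjoint $V_1,V_2$ with $|V_1|=\lceil n/2\rceil$, $|V_2|=\lceil n/3\rceil$ and $\rho_G(V_1,V_2)\le k$. A union bound over all such ordered pairs then yields
\[
\mathbf{P}\bigl(\mathbf{rw}(G)\le k\bigr)\le\sum_{(V_1,V_2)}\mathbf{P}\bigl(\rho_G(V_1,V_2)\le k\bigr),
\]
the sum running over disjoint subsets of the stated sizes.

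For each fixed pair the summand is a rank probability. Since transposing leaves the rank unchanged, $\mathbf{rank}(N_{V_1,V_2})$ is distributed as $\mathbf{rank}\left(M(\lceil n/3\rceil,\lceil n/2\rceil;p)\right)$, so Lemma~\ref{lemma_rmprob} with $C=12.6$ bounds each summand by $2^{(\frac12-\frac{12.6}{6})n}=2^{-1.6n}$. Next I would count the pairs: assigning each of the $n$ vertices to ``$V_1$'', ``$V_2$'', or ``neither'' shows there are at most $3^n$ of them. Multiplying the two bounds gives
\[
\mathbf{P}\bigl(\mathbf{rw}(G)\le k\bigr)<3^n\cdot 2^{-1.6n}=2^{(\log_2 3-1.6)n}<2^{-0.015n},
\]
using $\log_2 3<1.585$.

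The one point requiring care is the constant bookkeeping. The union bound contributes a factor that grows like $3^n=2^{1.584\ldots n}$, so the per-pair exponential decay must strictly beat it. This forces $C$ large enough that $\frac12-\frac{C}{6}$ drops below $-\log_2 3$; the choice $C=12.6$ produces per-pair decay $2^{-1.6n}$ and leaves a margin of $1.6-\log_2 3\approx 0.015$, which is exactly the exponent in the stated bound. Everything else --- the independence of cut entries, the reduction via Lemma~\ref{lemma_seperator}, and the transpose-invariance of rank --- is routine, so I expect no substantive obstacle beyond tuning this constant.
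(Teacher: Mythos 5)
Your proposal is correct and follows essentially the same route as the paper's own proof: reduce via Lemma~\ref{lemma_seperator} to the existence of a low-rank $N_{V_1,V_2}$ with $|V_1|=\lceil n/2\rceil$, $|V_2|=\lceil n/3\rceil$, take a union bound over the at most $3^n$ such pairs, and apply Lemma~\ref{lemma_rmprob} with $C=12.6$ to get the per-pair bound $2^{-1.6n}$, so that $3^n\cdot 2^{-1.6n}<2^{-0.015n}$. Your explicit remarks on the independence of cut entries and the transpose-invariance of rank are points the paper leaves implicit, but they change nothing of substance.
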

\begin{proof}
Let $G=G(n,p)$, $\mathcal{S}=\{N_{V_1 ,V_2}:|V_1|=\lceil\frac{n}{2}\rceil, |V_2|=\lceil\frac{n}{3}\rceil\ \mbox{ for disjoint $V_1 ,V_2\subseteq V(G)$} \}$ and let $\mu = \min_{N\in\mathcal{S}} \mathbf{rank}(N).$ By Lemma \ref{lemma_seperator}, we have
$\mu\le\mathbf{rw}(G)$. Thus it suffices to show that
$$
\mathbf{P}\left(\mu\le\Bigl\lceil\frac{n}{3}\Bigr\rceil-\frac{12.6}{\log_2 \frac{1}{\eta}}\right)<2^{-0.015n}.
$$
For each $N\in\mathcal{S}$, let $A_N$ be the event that $\mathbf{rank}(N)\le\lceil\frac{n}{3}\rceil-\frac{12.6}{\log_2 \frac{1}{\eta}}$.
Note that
$$
\mathbf{P}\left(\mu\le\Bigl\lceil\frac{n}{3}\Bigr\rceil-\frac{12.6}{\log_2 \frac{1}{\eta}}\right)=\mathbf{P}(\bigcup_{N\in\mathcal{S}}A_N)\le\sum_{N\in\mathcal{S}}\mathbf{P}(A_N).
$$
By Lemma \ref{lemma_rmprob}, we have $\mathbf{P}(A_N)\le2^{-1.6n}$. Notice also that $|\mathcal{S}|\le 3^n$. Therefore,
$$
\mathbf{P}\left(\mu\le\Bigl\lceil\frac{n}{3}\Bigr\rceil-\frac{12.6}{\log_2 \frac{1}{\eta}} \right)
\le 3^n 2^{-1.6n} <  2^{-0.015n}. \qedhere
$$
\end{proof}
The main theorem directly follows from this proposition.
\begin{THM}
Asymptotically almost surely, $G=G(n,p)$ satisfies the following:
\begin{enumerate}[(i)]
\setlength{\itemsep}{1pt}
  \setlength{\parskip}{0pt}
  \setlength{\parsep}{0pt}
\item if $p\in(0,1)$ is a constant, then $\lceil\frac{n}{3}\rceil - O(1)\le \mathbf{rw} (G) \le \lceil\frac{n}{3}\rceil$, and
\item if $\frac{1}{n}\ll\min(p,1-p)$, then $\lceil\frac{n}{3}\rceil - o(n)\le\mathbf{rw} (G) \le \lceil\frac{n}{3}\rceil$.
\end{enumerate}
\end{THM}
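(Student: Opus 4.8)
The plan is to read off both statements directly from Proposition~\ref{thm_rwasymptotic}: the upper bounds come for free, while each lower bound is exactly the complement of the bad event whose probability the proposition controls. The upper bounds are in fact deterministic, since, as noted in the introduction, $\mathbf{rw}(G)\le\lceil n/3\rceil$ holds for every graph $G$ on $n$ vertices; thus the upper inequalities in (i) and (ii) require no probabilistic input.

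For the lower bounds I would track the single quantity $\theta:=\frac{12.6}{\log_2(1/\eta)}$ appearing in Proposition~\ref{thm_rwasymptotic}, where $\eta=\max(p,1-p)$. The proposition states that the event $\{\mathbf{rw}(G)\le\lceil n/3\rceil-\theta\}$ has probability less than $2^{-0.015n}$, which tends to $0$; hence a.a.s.\ $\mathbf{rw}(G)>\lceil n/3\rceil-\theta$, and it only remains to estimate $\theta$ in each regime. In case (i), $p\in(0,1)$ is constant, so $\eta<1$ is a fixed constant, $\log_2(1/\eta)$ is a positive constant, and therefore $\theta=O(1)$; this gives $\mathbf{rw}(G)\ge\lceil n/3\rceil-O(1)$ a.a.s.

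For case (ii) I would set $q:=\min(p,1-p)$, so that $\eta=1-q$ and the hypothesis $\frac1n\ll\min(p,1-p)$ becomes $nq\to\infty$. The only computation is that $\theta=o(n)$: from the elementary inequality $-\ln(1-q)\ge q$ one gets $\log_2\tfrac1\eta=-\log_2(1-q)\ge\tfrac{q}{\ln 2}$, hence $\theta\le\tfrac{12.6\ln 2}{q}$, and $\tfrac1q=o(n)$ precisely because $nq\to\infty$. Substituting into the previous paragraph yields $\mathbf{rw}(G)\ge\lceil n/3\rceil-o(n)$ a.a.s., completing the proof. I expect no genuine obstacle here: the entire difficulty of the theorem has already been absorbed into Proposition~\ref{thm_rwasymptotic} (and ultimately into the rank estimate of Lemma~\ref{lemma_rmprob} via the separator Lemma~\ref{lemma_seperator}), so this last step is purely a matter of unpacking the constant $\theta$ in the two parameter ranges.
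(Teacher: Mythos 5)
Your proposal is correct and is exactly the paper's route: the paper simply remarks that the theorem ``directly follows'' from Proposition~\ref{thm_rwasymptotic}, and you have carried out that deduction faithfully --- deterministic upper bound $\mathbf{rw}(G)\le\lceil n/3\rceil$, plus the estimate of $\theta=\frac{12.6}{\log_2(1/\eta)}$ as $O(1)$ for constant $p$ and, via $-\log_2(1-q)\ge q/\ln 2$, as $O(1/q)=o(n)$ when $n\min(p,1-p)\to\infty$. No gaps; your computation is the content the paper left implicit.
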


\section{Rank-width of sparse random graphs}

In this section we investigate the rank-width of $G(n,p)$ when $p = {c}/{n}$ for some constant $c > 0$. 
Note that Proposition \ref{thm_rwasymptotic} does not give any information when $p = {c}/{n}$ and $c$ is close to 1. As mentioned in the introduction, the linear lower bound of rank-width in this range of $p$ is closely related to a sharp threshold with respect to having linear tree-width.
  We show that, when $p={c}/{n}$,
\begin{enumerate}[(i)]
\item if $c < 1$, then rank-width is a.a.s. at most $2$,  
\item if $c = 1$, then rank-width is a.a.s.\ at most $O(n^{\frac{2}{3}})$ and,
\item if $c > 1$, then there exists $r =r(c)$ such that rank-width is a.a.s.\ at least $r n$.
\end{enumerate}
Erd\H{o}s and R\'{e}nyi~\cite{ER1960,ER1961} proved that if $c < 1$ then $G(n,p)$ a.a.s.\ consists of trees and unicyclic (at most one edge added to a tree) components and if $c =1$ then the largest component has size at most $O(n^{\frac{2}{3}})$. Therefore, (i) and (ii) follow easily because trees and unicyclic graphs have rank-width at most 2.

Thus, (iii) is the only interesting case. When $c>1$, $G(n,p)$ has a unique component of linear size, called the \emph{giant component}. Hence, in order to prove a lower bound on the rank-width of $G(n,p)$, it is enough to find a lower bound of the rank-width of the giant component.

We need some definitions to describe necessary structures.
Let
$G=(V,E)$ be a connected graph. For a non-empty proper subset $S$ of
$V(G)$, let $d_G(S)= \sum_{v \in S} \deg_G(v)$.
The \emph{(edgewise) Cheeger constant} of a connected graph $G$ is
\[ \Phi(G) = \min_{\emptyset\neq S \subsetneq V(G)} \frac{e_G(S, V(G) \setminus S)}{\min(d_G(S), d_G(V(G) \setminus S))}. \]
\begin{REM}
 In \cite{BKW2006}, the following alternative definition of the
Cheeger constant of a connected graph $G$ is used. For a vertex
$v$, let $\pi_v = \frac{\deg_G(v)}{2|E(G)|}$ and for vertices $v$ and $w$ of $G$,
define
\[p_{vw}=
\begin{cases}
  {1}/\deg_G(v) & \text{if $v$ and $w$ are adjacent,}\\
  0 & \text{otherwise.}
\end{cases}
\]
For a subset $S$ of $V(G)$, let $\pi_G(S) = \sum_{v \in S} \pi_v$.
Thus $d_G(S)=2\lvert E(G)\rvert \pi_G(S)$.
In \cite{BKW2006}, the Cheeger constant of a graph $G$ is defined
alternatively as
\[ \min_{0 < \pi_G(S) \le \frac{1}{2}} \frac{1}{\pi_G(S)}\sum_{i \in S, j \notin S} \pi_i p_{ij}.\]
We can easily see that these definitions are equivalent as follows:
\begin{align*}
\Phi(G)
  = \min_{\emptyset \neq S \subsetneq V(G)} \frac{e_G(S, V(G) \setminus
    S)}{\min(d_G(S), d_G(V(G) \setminus S))}
 &= \min_{0 < \pi_G(S) \le \frac{1}{2}} \frac{e_G(S, V(G) \setminus
    S)}{d_G(S)} \\
  &= \min_{0 < \pi_G(S) \le \frac{1}{2}} \frac{1}{\pi_G(S)}\sum_{i \in S, j \notin S} \pi_i p_{ij} ,
\end{align*}
where the second equality follows from the fact that
$\pi_G(S) + \pi_G(V(G) \setminus S) = 1$.
\end{REM}

Benjamini, Kozma and Wormald \cite{BKW2006} proved the following theorem.

\begin{THM}[Benjamini, Kozma and Wormald \cite{BKW2006}] \label{thm_giantstructure}
Let $c>1$ and $p=c/n$. Then there exist $\alpha, \delta > 0$
such that $G(n,p)$ a.a.s.\ contains a connected subgraph $H$ such that $\Phi(H) \ge \alpha$ and $|V(H)| \ge \delta n$.
\end{THM}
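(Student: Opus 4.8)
The plan is to realize $H$ as a bounded-length subdivision of an expanding multigraph sitting inside the giant component. First I would invoke the classical Erd\H{o}s--R\'enyi theory: for $c>1$, a.a.s.\ $G(n,p)$ has a unique giant component of size $\Theta(n)$, and its $2$-core $C$ (the maximal subgraph of minimum degree at least $2$) also has $\Theta_c(n)$ vertices and edges. The $2$-core is the natural candidate, but it cannot be used directly: a bare path (a path all of whose internal vertices have degree $2$) of length $\ell$ yields a set $S$ with $d_C(S)=2(\ell-1)$ and $e_C(S,V(C)\setminus S)=2$, so a single bare path of length $\Theta(\log n)$---which $C$ does contain---forces $\Phi(C)=o(1)$. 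Hence the degree-$2$ vertices, and especially long bare paths, are exactly the obstruction, and the whole strategy is organized around excising them in a controlled way.

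The engine is the \emph{kernel} $K$ obtained from $C$ by suppressing every degree-$2$ vertex, i.e.\ replacing each maximal bare path by a single edge; $K$ is a multigraph with minimum degree at least $3$ and, for fixed $c>1$, with $\Theta_c(n)$ vertices and edges. Conditioned on its degree sequence, $K$ is distributed as (and is contiguous to) the configuration-model random multigraph on that sequence, so the core estimate I would prove is that such a random multigraph with minimum degree at least $3$ is a.a.s.\ an edge-expander: there is $\beta=\beta(c)>0$ with $\Phi(K)\ge\beta$. This I would establish by a first-moment union bound over cuts. Fixing a target volume, I would count the vertex sets $S$ with $d_K(S)$ near a prescribed value and bound, via the random pairing of half-edges, the probability that fewer than $\beta\,d_K(S)$ of the half-edges incident with $S$ are matched outside $S$. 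The minimum-degree-$3$ hypothesis guarantees that each vertex of $S$ contributes at least three half-edges, which makes the probability of a small boundary decay fast enough to beat the entropy of choosing $S$; summing the resulting geometric series over all cut sizes gives $\Phi(K)\ge\beta$ a.a.s. (Self-loops and parallel edges appear only $O(1)$ times in expectation and are harmless.)

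To pass from $K$ back to a genuine subgraph of $G$ I would control the bare-path lengths. In the configuration-model picture the degree-$2$ vertices of $C$ are spread along the kernel edges essentially like i.i.d.\ lengths with an exponential tail, so the probability that a given kernel edge carries a bare path of length exceeding a constant $L$ is some $q(L)$ with $q(L)\to 0$ as $L\to\infty$. Calling an edge of $K$ \emph{long} if its bare path has length exceeding $L$, the long edges form a sparse random subset, so deleting them amounts to bond percolation on the expander $K$; choosing $L=L(c)$ large enough that $q(L)$ lies below the relevant expansion threshold, a standard argument shows the result retains a linear-sized component $K'$ with $\Phi(K')\ge\beta/2$. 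Finally I would let $H$ be the subgraph of $C$ obtained by un-suppressing the edges of $K'$, reinstating their bare paths, each now of length at most $L$. Then $H$ is a connected subgraph of $G$ with $\Theta_c(n)\ge\delta n$ vertices, and being a subdivision of the expander $K'$ in which every edge is subdivided at most $L$ times, its Cheeger constant is bounded below by a constant $\alpha=\alpha(\beta,L)>0$: a partial bare path gives a cut of ratio at least $1/(2L)$, while every larger set inherits the expansion of $K'$ up to the factor $L$. This yields $\Phi(H)\ge\alpha$ and $|V(H)|\ge\delta n$.

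The main obstacle I anticipate is this last transfer rather than the kernel expansion itself. Proving that a random minimum-degree-$3$ multigraph expands is a routine, if careful, first-moment calculation; the delicate part is the degree-$2$ vertices, since the full $2$-core genuinely fails to be a constant-factor expander. One must simultaneously justify the exponential tail on bare-path lengths and the near-independence needed to treat long edges as percolation, verify that deleting the long edges leaves a linear-sized piece whose expansion is still bounded below, and control the constant-factor loss incurred when a bounded subdivision turns edge-expansion of $K'$ into edge-expansion of $H$. Making the configuration-model conditioning rigorous---the contiguity of the $2$-core and its kernel to the appropriate random multigraph models, together with the a.a.s.\ bounds on the degree sequence---is the remaining technical burden.
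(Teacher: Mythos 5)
The paper does not actually prove this theorem: it is quoted from Benjamini, Kozma and Wormald, and the paper's entire treatment is the Remark explaining how to read the statement off of \cite[Theorem 4.2]{BKW2006} --- the subgraph $H$ is their graph $R_N(G)$, a.a.s.\ an $\alpha$-strong core of $G$, and the linear lower bound on $|V(H)|$ is extracted from their Lemma 4.7 via the kernel. So your sketch is not comparable to anything in this paper; what it is, in essence, is a reconstruction of the strategy of the cited BKW argument itself: pass to the $2$-core, suppress degree-$2$ vertices to obtain a kernel of minimum degree $3$, use contiguity with the configuration model and a first-moment union bound over cuts to prove edge expansion of the kernel (minimum degree $3$ is exactly what lets the pairing probabilities beat the entropy of choosing $S$, since $|S|\le d_K(S)/3$), then excise the long bare paths and transfer expansion through a bounded-length subdivision. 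You also correctly identify that the degree-$2$ vertices are the real obstruction and that the full $2$-core genuinely fails to expand; the outline is sound and locates the work in the right places.

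One step is wrong as literally stated, and it is the one you flag as delicate. After deleting the ``long'' kernel edges you assert that the graph ``retains a linear-sized component $K'$ with $\Phi(K')\ge\beta/2$''. The component will not do: even if each edge of a minimum-degree-$3$ expander is deleted independently with a tiny probability $q$, the surviving giant component a.a.s.\ contains induced paths of degree-$2$ vertices of length $\Theta(\log n/\log(1/q))$ --- a chain of $k$ degree-$3$ kernel vertices each losing one suitable edge costs only $q^{\Theta(k)}$ per candidate, and there are exponentially many candidates --- and such a path forces $\Phi = O(1/\log n)=o(1)$ by exactly the computation you used against the $2$-core. What you need instead is an expander-pruning step: iteratively remove any set $S$ violating $e(S, V\setminus S)\ge (\beta/2)\,d(S)$, charge the removed volume to the deleted edges, and show the process halts after removing only an $O(q/\beta)$ fraction of the total volume; the surviving subgraph (not merely a component) is then your $K'$, and it is connected because its Cheeger constant is positive. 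This pruning lemma is deterministic, so it needs no independence structure on the long edges at all --- only an a.a.s.\ upper bound on their number, which first and second moments supply --- and it is precisely the role played by BKW's iterative strong-core construction. With that substitution, and with citations covering the contiguity of the $2$-core and its kernel to the appropriate configuration models, your sketch becomes a genuine proof.
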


\begin{REM}
  The above theorem is a consequence of \cite[Theorem 4.2]{BKW2006}.
  The graph $H$ in Theorem~\ref{thm_giantstructure} is the graph
  $R_N(G)$ in \cite[Theorem 4.2]{BKW2006}, which proves that $R_N(G)$
  is a.a.s.\ an $\alpha$-strong core of $G$. This means that $R_N(G)$
  is a subgraph of $G$ with $\Phi(R_N(G)) \ge \alpha$ by the
  definitions given in Section 2.2 and Section 3 of \cite{BKW2006}.
  The condition $|V(H)| \ge \delta n$ is not explicit in \cite[Theorem
  4.2]{BKW2006}. However this fact follows from \cite[Lemma
  4.7]{BKW2006}, because $R_N(G)$ must have more vertices than its
  kernel $K(R_N(G))$ (the definition of kernel is given in
  \cite[Section 4]{BKW2006}). Note that $\hat{n}$ in \cite[Lemma
  4.7]{BKW2006} satisfies $\hat{n} = \Omega(n)$ by the remark
  following \cite[Lemma 4.1]{BKW2006}. The proof of Theorem 4.2 given
  in \cite[Section 5]{BKW2006} also mentioned this fact explicitly.
\end{REM}

A graph $H$ with the property as in Theorem \ref{thm_giantstructure} is called an \emph{expander graph}. The simple restriction of $\Phi(H)$ being bounded away from $0$ provides a
strikingly rich structure to the graph as in Theorem
\ref{thm_giantstructure}.
Interested readers are referred to the survey paper \cite{HLW2006}.

By using this expander subgraph $H$, we will show that $G(n,p)$ must have large 
rank-width when $p={c}/{n}$ and $c>1$. Before proving this, we need a technical lemma
which allows us to control the maximum degree of a random graph $G(n,p)$.

\begin{LEM} \label{lemma_degreecontrol}
Let $c>1$ be a constant and $p=c/n$. Then for every $\varepsilon > 0$, there exists $M=M(c, \varepsilon)$ such that $G=G(n,p)$ a.a.s.\ has the following property: 
Let $X$ be the collection of vertices which have degree at least $M$. Then the number of edges incident with $X$ is at most $\varepsilon n$.
\end{LEM}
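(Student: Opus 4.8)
The plan is to control the contribution of high-degree vertices by a first-moment (expectation) argument combined with a concentration or tail bound. The quantity to control is the number of edges incident with the set $X$ of vertices of degree at least $M$. Let me write $Y$ for this quantity. The key observation is that each such edge has at least one endpoint of degree $\ge M$, so I want to show $\mathbf{E}[Y]$ is small (of order $\varepsilon n$, or smaller if I have room to spare) once $M=M(c,\varepsilon)$ is chosen large enough, and then upgrade this to an a.a.s.\ statement.

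First I would set up the expectation. Since the number of edges incident with $X$ is at most $\sum_{v\in X}\deg_G(v) = d_G(X)$, it suffices to bound $\mathbf{E}\bigl[\sum_v \deg_G(v)\,\mathbf{1}[\deg_G(v)\ge M]\bigr]$. By symmetry over the $n$ vertices this is $n\cdot \mathbf{E}\bigl[\deg_G(v)\,\mathbf{1}[\deg_G(v)\ge M]\bigr]$ for a fixed vertex $v$. The degree of a fixed vertex in $G(n,c/n)$ is $\mathrm{Bin}(n-1,c/n)$, which is asymptotically Poisson with mean $c$. The core estimate is therefore that the truncated expectation $\mathbf{E}\bigl[\deg_G(v)\,\mathbf{1}[\deg_G(v)\ge M]\bigr]$ is at most some function $g(M)$ with $g(M)\to 0$ as $M\to\infty$, uniformly in $n$. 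This is a tail estimate for the binomial: the expected degree restricted to the heavy tail $\{\deg \ge M\}$ decays (super-)exponentially in $M$ because the Poisson/binomial tail does. Concretely I would bound $\mathbf{P}(\deg_G(v)=k)\le \binom{n}{k}(c/n)^k \le (c e/k)^k$ (using $\binom{n}{k}\le (en/k)^k$), and then sum $\sum_{k\ge M} k\,(ce/k)^k$, which converges and tends to $0$ as $M\to\infty$. Choosing $M$ so that $n\cdot g(M) \le \tfrac{\varepsilon}{2} n$ gives $\mathbf{E}[Y]\le \tfrac{\varepsilon}{2}n$.

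To pass from the expectation bound to an a.a.s.\ statement I would apply Markov's inequality: $\mathbf{P}(Y \ge \varepsilon n) \le \mathbf{E}[Y]/(\varepsilon n) \le \tfrac{1}{2}$. This alone only gives probability bounded away from $1$, not convergence to $1$, so I need a little more. The clean fix is to choose $M$ so aggressively that $\mathbf{E}[Y]=o(n)$ in a strong sense — but since $M$ is a fixed constant depending on $\varepsilon$, $\mathbf{E}[Y]$ is genuinely of order $n$, and Markov's inequality at a fixed threshold will not yield a.a.s.\ by itself. The remedy is either (a) to prove concentration of $Y$ around its mean and take $M$ large enough that $\mathbf{E}[Y]\le \tfrac{\varepsilon}{2}n$ while the fluctuations are $o(n)$, or (b) to split the target $\varepsilon$: pick $M$ so that $\mathbf{E}[Y]\le \varepsilon' n$ with $\varepsilon'\ll\varepsilon$ and apply Markov to get $\mathbf{P}(Y\ge \varepsilon n)\le \varepsilon'/\varepsilon$, which can be made $o(1)$ only if $\varepsilon'$ depends on $n$ — again requiring $M\to\infty$. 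I therefore expect the honest route to go through concentration: $Y$ is a function of the independent edge-indicator variables, and changing one edge changes $Y$ by $O(1)$, so a bounded-differences (Azuma/McDiarmid) inequality gives $\mathbf{P}(|Y-\mathbf{E}[Y]|\ge t)\le 2\exp(-t^2/(2\binom{n}{2}))$ up to constants; taking $t=\tfrac{\varepsilon}{2}n$ yields the desired a.a.s.\ concentration.

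The step I expect to be the main obstacle is exactly this passage from the mean bound to the high-probability bound, and in particular verifying a clean bounded-differences constant: a single edge flip can only directly change $Y$ by a bounded amount, but flipping an edge can also change whether an endpoint crosses the degree-$M$ threshold, affecting all of that vertex's other incident edges' membership in ``edges incident with $X$.'' I would handle this by showing the Lipschitz constant is still $O(M)$ (one vertex crossing the threshold toggles at most its $\deg\ge M$ incident edges, but these are precisely the edges one wants to count, and the effect is controlled), so McDiarmid applies with a constant that is absorbed into the constant in the exponent. Once the Lipschitz constant is pinned down, the concentration and the union of the two good events (small mean, small fluctuation) close the argument.
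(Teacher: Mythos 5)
Your first-moment step is sound and matches the paper's: both truncate the degree at $M$, set $Y=\sum_v \deg(v)\,\mathbf{1}[\deg(v)\ge M]$, and use the binomial/Poisson tail $\binom{n-1}{k}(c/n)^k\le c^k/k!$ to make $\mathbb{E}[Y]\le\frac{\varepsilon}{2}n$ for $M=M(c,\varepsilon)$ large. You also correctly diagnose that Markov alone cannot give an a.a.s.\ statement. But your proposed fix --- bounded differences --- has a genuine quantitative gap: it fails at exactly the scale you need. With edge exposure there are $\binom{n}{2}$ independent variables, and (as you note) flipping one edge changes $Y$ by $O(M)$, so $\sum_i c_i^2=\Theta(M^2n^2)$ and McDiarmid concentrates $Y$ only at scale $\sqrt{\sum_i c_i^2}=\Theta(Mn)$. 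Plugging $t=\frac{\varepsilon}{2}n$ into your own displayed bound gives an exponent of $t^2/(2\binom{n}{2})=\Theta(\varepsilon^2)$, a \emph{constant} independent of $n$; the resulting probability bound $2e^{-\Theta(\varepsilon^2/M^2)}$ is bounded away from $0$ and does not tend to zero, so it does not yield a.a.s.\ concentration. Vertex exposure is worse, since re-randomizing one vertex's edges changes $Y$ by up to $\Theta(Mn)$, so the Lipschitz constants are unbounded without a further truncation device. Pinning down the Lipschitz constant as $O(M)$, which you flagged as the main obstacle, is actually the easy part; the inequality itself is simply too weak here because worst-case sensitivity vastly overstates the true fluctuations of $Y$.

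The paper closes this gap with a second-moment computation rather than a concentration inequality: it bounds $\operatorname{Var}(Y)$ directly and applies Chebyshev. The key point your route misses is that the summands $Y_v$ are \emph{nearly} independent: conditioning on whether the edge $vw$ is present makes $Y_v$ and $Y_w$ independent, and a short calculation gives $\mathbb{E}[Y_vY_w]-\mathbb{E}[Y_v]\mathbb{E}[Y_w]\le p\,(\text{tail sums})^2=O(\varepsilon^2/n)$ per pair, so $\operatorname{Var}(Y)\le(1+\varepsilon)\varepsilon n$ --- of order $n$, not the $\Theta(M^2n^2)$ that worst-case bounded differences charges you. Chebyshev at deviation $\frac{\varepsilon}{2}n$ then gives $\mathbf{P}(Y>\varepsilon n)\le O\bigl(1/(\varepsilon n)\bigr)\to 0$. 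If you want to repair your write-up with minimal change, replace the McDiarmid step by this covariance estimate (via the conditional-independence trick) plus Chebyshev; alternatively a sharper concentration tool sensitive to variance rather than worst-case increments (e.g.\ Talagrand or Bernstein-type inequalities for functions of independent variables) could work, but plain Azuma/McDiarmid as proposed cannot.
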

\begin{proof}
Let $V=V(G)$. Let $M$ be a large number satisfying
\begin{equation}\label{chooseM}
\sum_{k=M}^{\infty}k\frac{c^k}{(k-1)!}<\frac{\varepsilon}{2}.
\end{equation}
For each $v \in V$, define a random variable $Y_v = \deg(v)$ if $\deg(v)\ge M$ and $Y_v=0$ otherwise. Then by \eqref{chooseM},
\begin{equation}
  \begin{split}
    \mathbb{E}[Y_v^2]&= \sum_{k=M}^{n-1} k^2 \mathbf{P}(\deg(v) = k)  \\
    &\le \sum_{k=M}^{n-1} k^2\binom{n-1}{k} \left( \frac{c}{n} \right)^k \le  
    \sum_{k=M}^{\infty} k\frac{c^k}{(k-1)!}<\frac{\varepsilon}{2}.
  \end{split}
  \label{eqn_degreebound}
\end{equation}
Since $Y_v\le Y_v^2$, we also have $\mathbb{E}[Y_v]\le\varepsilon/2$. Note that the number of edges incident with $X$ is at most $\sum_{v\in V}Y_v$. Hence, it is enough to prove a.a.s.\ $Y = \sum_{v \in V} Y_v \le \varepsilon n$. Observe that $\mathbb{E}[Y] \le \frac{\varepsilon}{2}n$. Moreover, the variance of $Y$ can be computed as
\begin{equation} \label{eqn_degreevariance} 
  \begin{split}
    \mathbb{E}[(Y - \mathbb{E}[Y])^2] &= \sum_{v \in V} \big( \mathbb{E}[Y_v^2] - \mathbb{E}[Y_v]^2 \big) + \sum_{v \neq w \in V} \big( \mathbb{E}[ Y_v Y_w ] - \mathbb{E}[Y_v] \mathbb{E}[Y_w] \big)   \\
    &\le \varepsilon n  + \sum_{v \neq w \in V} \big( \mathbb{E}[ Y_v Y_w ] - \mathbb{E}[Y_v] \mathbb{E}[Y_w] \big),
 \end{split}
\end{equation}
where for each $v,w \in V, v \neq w$,  
\begin{multline*}
  \mathbb{E}[ Y_v Y_w ] - \mathbb{E}[Y_v]\mathbb{E}[Y_w]\\
  = \sum_{k,l=M}^{n-1} kl \big( \mathbf{P}(\deg(v) = k,  \deg(w) = l) - \mathbf{P}(\deg(v) = k)\mathbf{P}(\deg(w) = l) \big).
\end{multline*}
Let $q_{k}=\mathbf{P}(\deg(v)=k|vw\notin E(G))=\mathbf{P}(\deg(v)=k+1|vw\in E(G))$, for distinct vertices $v,w$ in $G(n,p)$.
Notice that, given either $vw\in E(G)$ or $vw\notin E(G)$, $Y_v$ and $Y_w$ are independent. Thus, we deduce the following:
\begin{align*}
\lefteqn{\mathbb{E}[ Y_v Y_w ] - \mathbb{E}[Y_v]\mathbb{E}[Y_w]}\\
&= \sum_{k,l=M}^{n-1} kl \big( p q_{k-1} q_{l-1} + (1-p) q_{k} q_{l} - (p q_{k-1} + (1-p)q_{k})(p q_{l-1} + (1-p)q_{l})\big)  \\
&\le p\sum_{k,l=M}^{n-1} kl( q_{k-1} q_{l-1} +  q_k q_l)  \\
&\le 2p \sum_{k=M-1}^{n-1} (k+1)q_{k}  \sum_{l=M-1}^{n-1} (l+1)q_{l}
\quad\le \frac{\varepsilon^2}{n}.
\end{align*}

Last inequality follows from \eqref{chooseM}, since similarly as done in \eqref{eqn_degreebound} we get
\[
\sum_{k=M-1}^{n-1} (k+1) q_{k}=\sum_{k=M-1}^{n-1}(k+1)\binom{n-2}{k} \left( \frac{c}{n} \right)^k 
\le\sum_{k=M}^{\infty}k\frac{c^{k-1}}{(k-1)!}< \frac{\varepsilon}{2c}
\]
and $c>1$. Thus, by \eqref{eqn_degreevariance}, we proved that the variance $\sigma^2$ of $Y$ is at most $(1 + \varepsilon)\varepsilon n$. Finally, using Chebyshev's inequality and the fact $\mathbb{E}[Y_v]\le\varepsilon/2$, we show that
\[ \mathbf{P}\big(Y > \varepsilon n\big)\le\mathbf{P}\left(Y \ge \mathbb{E}[Y] + \frac{\varepsilon n}{2} \right) \le \frac{\sigma^2}{\varepsilon^2n^2 /4} \le \frac{1+\varepsilon}{\varepsilon n/4}, \]
which concludes the proof.
\end{proof}
The following lemma will be used in the proof of the main theorem.
\begin{LEM}\label{rank_control}
Let $A$ be a matrix over $\mathbb{F}_2$ with at least $n$ non-zero entries. If each row and column contains at most $M$ non-zero entries, then $\mathbf{rank}(A)\ge \frac{n}{M^2}$.
\end{LEM}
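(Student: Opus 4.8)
The plan is to bound the total number of nonzero entries of $A$ \emph{from above} in terms of $r=\mathbf{rank}(A)$ and $M$, and then compare with the hypothesis that $A$ has at least $n$ nonzero entries. So I would begin by setting $r=\mathbf{rank}(A)$ and selecting $r$ rows $v_1,\dots,v_r$ of $A$ that form a basis of the row space; the point of choosing a \emph{basis} (rather than just independent rows) is that every row of $A$ is then an $\mathbb{F}_2$-linear combination of $v_1,\dots,v_r$.

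The key observation I would establish first is that every nonzero column of $A$ contains a nonzero entry in at least one of the basis rows $v_1,\dots,v_r$. Indeed, suppose some column $j$ were zero in all of $v_1,\dots,v_r$. Since each row of $A$ is a linear combination of the $v_i$, the $j$-th coordinate of every row of $A$ would also be zero, so column $j$ would be identically zero. Taking the contrapositive, the nonzero columns of $A$ are ``covered'' by the nonzero entries occurring in the $r$ basis rows.

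From here a double-counting argument finishes the proof. The $r$ basis rows contain at most $rM$ nonzero entries in total, since each row has at most $M$ of them; hence, by the covering observation, the number of nonzero columns of $A$ is at most $rM$. Because each column of $A$ also has at most $M$ nonzero entries, the total number of nonzero entries of $A$ is at most $M$ times the number of nonzero columns, i.e.\ at most $rM\cdot M=rM^2$. Combining with the hypothesis that $A$ has at least $n$ nonzero entries gives $n\le rM^2$, and therefore $\mathbf{rank}(A)=r\ge n/M^2$.

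I expect the only delicate point to be the covering observation, and specifically the need to take $v_1,\dots,v_r$ spanning the \emph{entire} row space rather than merely linearly independent; everything else is elementary counting together with the row and column degree bounds. No rounding is required, since the inequality $n\le rM^2$ already holds with $r$ an integer, so the stated bound $r\ge n/M^2$ follows directly even when $n/M^2$ is not an integer.
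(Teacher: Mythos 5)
Your proof is correct, and it takes a genuinely different route from the paper. The paper argues by induction on the number of nonzero entries: it picks a nonzero row $w$ with a nonzero first entry, deletes the at most $M$ rows whose first entry is $1$ (losing at most $M^2$ nonzero entries), applies the induction hypothesis to the resulting submatrix $A'$, and gains one rank because $w$ lies outside the row space of $A'$ (all rows of $A'$ vanish in the first coordinate). Your argument instead is a one-shot double count: choosing $r=\mathbf{rank}(A)$ rows forming a basis of the row space, you observe that every nonzero column must meet a nonzero entry of some basis row (your covering observation, which is the only nontrivial step and which you justify correctly -- this is exactly where using a spanning set rather than merely independent rows matters), so there are at most $rM$ nonzero columns and hence at most $rM^2$ nonzero entries in all of $A$, giving $n\le rM^2$ directly. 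Both proofs are elementary and use the degree bounds in the same dual way (row bound once, column bound once); yours avoids the induction and its implicit base case $n\le M^2$, and it isolates a clean structural fact (nonzero entries of $A$ are at most $\mathbf{rank}(A)$ times the product of the row and column degree bounds) that the paper's peeling argument leaves implicit, while the paper's version makes the rank increment explicit via a concrete certificate (a row outside a row space) at each step.
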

\begin{proof}
We apply induction on $n$. We may assume $n>M^2$. Pick a non-zero row $w$ of $A$. We may assume that the first entry of $w$ is non-zero, by permuting columns if necessary. Now remove all rows $w'$ whose first entry is 1. 
Since the first column has at most $M$ non-zero entries, we remove at most $M$ rows including $w$ itself. 
Hence, we get a submatrix $A'$ with at least $n-M^2$ non-zero entries. By induction hypothesis, 
$$\mathbf{rank}(A')\ge \frac{n-M^2}{M^2}\ge\frac{n}{M^2}-1.$$
By construction, $w$ does not belong to the row-space of $A'$ and therefore
$$\mathbf{rank}(A)\ge\mathbf{rank}(A')+1\ge\frac{n}{M^2}.\qedhere$$
\end{proof}

\begin{THM}
For $c > 1$, let $p = c/n$. Then there exists $r=r(c)$ such that a.a.s.\ $\mathbf{rw}(G(n,p)) \ge rn$.
\end{THM}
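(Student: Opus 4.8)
The plan is to combine the expander subgraph provided by Theorem~\ref{thm_giantstructure} with the degree control of Lemma~\ref{lemma_degreecontrol} and the rank bound of Lemma~\ref{rank_control}. The key idea is that a bisection of a graph with good Cheeger constant must cut many edges, and these cut edges force the corresponding cut-rank matrix to have many non-zero entries; once we also control the maximum degree, Lemma~\ref{rank_control} converts ``many non-zero entries'' into ``large rank''.

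First I would fix $\varepsilon>0$ small (to be chosen depending on $\alpha$ and $\delta$), apply Lemma~\ref{lemma_degreecontrol} to obtain a threshold $M=M(c,\varepsilon)$, and condition on the a.a.s.\ events that $G=G(n,p)$ contains the connected subgraph $H$ with $\Phi(H)\ge\alpha$ and $|V(H)|\ge\delta n$ from Theorem~\ref{thm_giantstructure}, and that at most $\varepsilon n$ edges of $G$ are incident with the set $X$ of vertices of degree at least $M$. Now suppose for contradiction that $\mathbf{rw}(G)\le rn$ for a small constant $r=r(c)$ to be determined. By Lemma~\ref{lemma_seperator} there are disjoint $V_1,V_2\subseteq V(G)$ with $|V_1|=\lceil\frac{n}{2}\rceil$, $|V_2|=\lceil\frac{n}{3}\rceil$ and $\rho_G(V_1,V_2)\le rn$. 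Intersecting this partition with $V(H)$, I would produce a partition of $V(H)$ into two parts each of size $\Omega(n)$ (here the sizes $\lceil n/2\rceil$ and $\lceil n/3\rceil$, together with $|V(H)|\ge\delta n$, guarantee both sides meet $H$ in a linear number of vertices, so that the smaller side $S$ satisfies $0<\pi_H(S)\le\frac12$). The Cheeger bound then gives $e_H(S,V(H)\setminus S)\ge\alpha\,d_H(S)=\Omega(n)$, so the cut between the two sides of $V(H)$, hence between $V_1$ and $V_2$, contains $\Omega(n)$ edges.

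Next I would pass from edges to non-zero entries of the cut-rank matrix $N_{V_1,V_2}$. Each cut edge between $V_1\cap V(H)$ and $V_2\cap V(H)$ contributes a non-zero entry, so $N_{V_1,V_2}$ has $\Omega(n)$ non-zero entries. Deleting the rows and columns indexed by the high-degree set $X$ removes at most $\varepsilon n$ of these entries (since at most $\varepsilon n$ edges touch $X$), leaving a submatrix $N'$ with at least $\Omega(n)-\varepsilon n=\Omega(n)$ non-zero entries provided $\varepsilon$ was chosen small relative to $\alpha\delta$. Every remaining row and column of $N'$ corresponds to a vertex of degree less than $M$, so $N'$ has at most $M$ non-zero entries per row and per column. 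Lemma~\ref{rank_control} then yields $\mathbf{rank}(N')\ge \frac{\Omega(n)}{M^2}$, and since deleting rows and columns cannot increase rank we get $\rho_G(V_1,V_2)=\mathbf{rank}(N_{V_1,V_2})\ge\mathbf{rank}(N')\ge r'n$ for an explicit $r'=r'(c)>0$. Choosing $r<r'$ contradicts $\rho_G(V_1,V_2)\le rn$, completing the proof.

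The main obstacle I anticipate is the bookkeeping in the second step: ensuring that \emph{both} sides of the partition induced on $V(H)$ are genuinely of linear size, so that the Cheeger inequality applies with $\pi_H(S)$ bounded away from $0$ on the appropriate side. Because $V_1$ and $V_2$ together cover only $\lceil\frac n2\rceil+\lceil\frac n3\rceil\approx\frac{5n}{6}$ vertices, one must check that $V(H)$ cannot be almost entirely swallowed by one part; this is where the specific sizes in Lemma~\ref{lemma_seperator} and the lower bound $|V(H)|\ge\delta n$ must be balanced carefully, and where the constant $r(c)$ ultimately comes from. The conversion from cut edges to matrix rank via Lemmas~\ref{lemma_degreecontrol} and~\ref{rank_control} is then routine once the linear edge-cut lower bound is in hand.
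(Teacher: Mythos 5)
There is a genuine gap, and it is exactly the one you flagged at the end but did not resolve: applying Lemma~\ref{lemma_seperator} to the whole graph $G$ does not work. The parenthetical claim that the sizes $\lceil\frac n2\rceil$, $\lceil\frac n3\rceil$ together with $|V(H)|\ge\delta n$ ``guarantee both sides meet $H$ in a linear number of vertices'' is false. Theorem~\ref{thm_giantstructure} only provides \emph{some} $\delta>0$, which is small for $c$ close to $1$ (the giant component itself has size $\theta(c)n$ with $\theta(c)\to 0$ as $c\to 1^{+}$). So the separator $(V_1,V_2)$ produced by Lemma~\ref{lemma_seperator} --- which comes from the adversarially chosen optimal rank-decomposition of $G$, not from you --- can perfectly well satisfy $V(H)\subseteq V_1$, since $|V_1|=\lceil\frac n2\rceil\ge\delta n$; then $V_2\cap V(H)=\emptyset$, the Cheeger inequality says nothing, and the whole argument collapses. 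A secondary problem in the same step: $V_1\cup V_2$ covers only about $\frac{5n}{6}$ of the vertices, so intersecting with $V(H)$ does not yield a partition of $V(H)$; even when both intersections are linear, the $\Omega(n)$ cut edges guaranteed by $\Phi(H)\ge\alpha$ may run to vertices of $H$ outside $V_1\cup V_2$ and contribute no entries to $N_{V_1,V_2}$. (Using the intermediate partition $(A,B)$ of $V(G)$ from the lemma's proof fixes this second issue but not the first.)

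The paper avoids the problem by reversing the quantifiers: instead of extracting one separator of $G$ and hoping it splits $H$, it applies Lemma~\ref{lemma_seperator} \emph{to the induced subgraph} $G[W]$ with $W=V(H)$. Concretely, it shows that \emph{every} partition $(W_1,W_2)$ of $W$ with $|W_1|,|W_2|\ge |W|/3$ satisfies $e_G(W_1,W_2)\ge\frac{\alpha\delta}{3}n$ (here the Cheeger bound applies cleanly because $(W_1,W_2)$ genuinely partitions $V(H)$), then trims the high-degree set $X$ via Lemma~\ref{lemma_degreecontrol} with $\varepsilon=\frac{\alpha\delta}{6}$ and applies Lemma~\ref{rank_control} to get $\rho_G(W_1,W_2)\ge\frac{\alpha\delta}{6M^2}n$; since this holds for all balanced partitions of $W$, Lemma~\ref{lemma_seperator} forces $\mathbf{rw}(G[W])\ge\frac{\alpha\delta}{6M^2}n$, and monotonicity of rank-width under induced subgraphs gives the bound for $G$. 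Your edge-counting, degree-trimming, and rank-conversion steps are all correct and identical in spirit to the paper's; to repair your write-up you should either adopt this induced-subgraph route, or re-prove Lemma~\ref{lemma_seperator} in a weighted form (choosing the tree edge that balances the leaves labeled by $V(H)$ rather than all leaves), which amounts to the same thing.
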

\begin{proof}
Denote $G(n,p)$ by $G$. Let $\alpha, \delta$ be constants from 
Theorem \ref{thm_giantstructure}, and $H$ be the expander subgraph also given by Theorem \ref{thm_giantstructure}. 
Let $W = V(H)$ and let $(W_1,W_2)$ 
be an arbitrary partition of $W$ such that $|W_1|, |W_2| \ge |W|/3$. 
Then since $\Phi(H) \ge \alpha$ and $H$ is connected, we have
\[  \alpha \le \frac{e_{H}(W_1, W_2)}{\min(d_H(W_1), d_H(W_2))} 
           \le \frac{e_{H}(W_1, W_2)}{\min(|W_1|, |W_2|)} \le \frac{e_{G}(W_1, W_2)}{|W|/3} . \]
Thus $e_{G} (W_1, W_2) \ge \frac{\alpha\delta}{3}n$. 
By Lemma \ref{lemma_degreecontrol}, there exists $M$ such that the number of edges incident with a vertex of degree at least $M$ is at most $\frac{\alpha\delta}{6}n$. 
Let $W_1' = W_1 \setminus X$ and $W_2' = W_2 \setminus X$. Since $e_{G}(W_1', W_2')\ge \frac{\alpha\delta}{6}n$, $N_{W_1',W_2'}$ has at least $\frac{\alpha\delta}{6}n$ entries with value 1. Moreover, $N_{W_1',W_2'}$ has at most $M$ entries of value 1 in each row and column. Hence, we can use Lemma \ref{rank_control} to obtain
\[ \frac{\alpha\delta}{6M^2} n \le \rho_G(W_1', W_2') \le \rho_G(W_1, W_2). \]
Since $W_1, W_2$ are arbitrary subsets satisfying $|W_1|, |W_2| \ge |W|/3$,  this implies that the 
induced subgraph $G[W]$ has rank-width at least $\frac{\alpha\delta}{6M^2}n$ by Lemma \ref{lemma_seperator}. Therefore, rank-width of $G$ is at least $\frac{\alpha\delta}{6M^2}n$. 
\end{proof}
\begin{COR}
Let $c > 1$ and $p = c/n$. Then there exists $t = t(c)$ such that a.a.s.\ $\mathbf{tw}(G(n,p)) \ge t n$.
\end{COR}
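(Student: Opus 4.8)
The plan is to read this off immediately from the linear lower bound on rank-width just established together with the general inequality $\mathbf{rw}(G)\le\mathbf{tw}(G)+1$ recorded as \eqref{tw_rw_ineq}. Concretely, the preceding theorem supplies a constant $r=r(c)>0$ such that $G=G(n,p)$ satisfies $\mathbf{rw}(G)\ge rn$ asymptotically almost surely. On the event that this holds, \eqref{tw_rw_ineq} gives
\[
rn\le\mathbf{rw}(G)\le\mathbf{tw}(G)+1,
\]
and hence $\mathbf{tw}(G)\ge rn-1$.

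It remains only to absorb the additive constant $-1$ into the multiplicative constant, which is where a.a.s.\ asymptotics help. I would set $t=t(c)=r/2$ (any $t<r$ works). For all $n\ge 2/r$ we have $rn-1\ge (r/2)n=tn$, so on the same high-probability event $\mathbf{tw}(G)\ge tn$ for all sufficiently large $n$. Since the event from the previous theorem occurs a.a.s., we conclude that $\mathbf{tw}(G)\ge tn$ a.a.s., as required.

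I do not expect any genuine obstacle here: the statement is a direct corollary, and the only point requiring a word of care is the passage from $rn-1$ to a clean linear bound $tn$, handled by choosing $t$ strictly below $r$ and using that an asymptotic almost-sure statement concerns only large $n$.
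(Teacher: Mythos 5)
Your proposal is correct and matches the paper's intended argument exactly: the corollary is meant to follow from the preceding theorem combined with the inequality $\mathbf{rw}(G)\le\mathbf{tw}(G)+1$ from \eqref{tw_rw_ineq}, which is precisely how the introduction justifies the tree-width bound. Your extra care in absorbing the additive $-1$ by taking $t<r$ is a fine (if routine) touch that the paper leaves implicit.
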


\paragraph{Acknowledgment.}
Part of this work was done during the IPAM Workshop
 \emph{Combinatorics: Methods and Applications in Mathematics and Computer Science}, 2009.
 The first author would like to thank Nick Wormald for the discussion at IPAM Workshop.

\end{document}